\newtheorem{theo}{Theorem}[section]
\newtheorem{lem}[theo]{Lemma}
\newtheorem{coro}[theo]{Corollary}
\newtheorem{prop}[theo]{Proposition}
\newtheorem{defi}[theo]{Definition}
\theoremstyle{rem}
\newtheorem{rem}[theo]{Remark}
\theoremstyle{ex}
\numberwithin{equation}{section}
\newcommand{\C}{{\mathbb C}}
\newcommand{\Z}{{\mathbb Z}}
\newcommand{\osp}{{\rm\mathfrak{osp}}}
\newcommand{\Sl}{{\rm\mathfrak{sl}}}
\newcommand{\g}{{\mathfrak g}}
\newcommand{\de}{\delta}
\newcommand{\ga}{\gamma}
\newcommand{\ve}{\varepsilon}
\newcommand{\Q}{{\mathcal Q}}
\newcommand{\hQ}{{\widehat{\mathcal Q}}}
\newcommand{\cq}{{\mathcal{C}_q}}
\newcommand{\Uq}{{{\rm  U}_q}}
\newcommand{\U}{{\rm  U }}
\newcommand{\Dr}{{\rm U}^D}
\newcommand{\D}{\mathfrak{U}^D}
\newcommand{\UU}{{\mathfrak U}}
\newcommand{\cI}{{\mathcal I}}
\newcommand{\e}{\xi^+ }
\newcommand{\f}{\xi^-}
\newcommand{\ef}{\xi^{\pm}}
\newcommand{\qe}{e}
\newcommand{\qf}{f}
\newcommand{\x}{\xi'}
\newcommand{\h}{\kappa}
\newcommand{\qk}{\gamma}
\newcommand{\hh}{\hat{\kappa}}
\newcommand{\X}{X}
\newcommand{\HH}{H}
\newcommand{\ka}{\mathfrak{k}}
\newcommand{\xp}{{\rm {exp}}}
\newcommand{\ad}{{ \mbox{\rm Ad}}}
\newcommand{\up}{\Upsilon}
\begin{document}

\title[Drinfeld realisation]{Drinfeld realisations of  quantum affine superalgebras}

\author{Ying Xu}
\author{R. B. Zhang}

\address[Xu]{School of Mathematics, Hefei University of Technology, Anhui Province, 230009, China}
\email{xuying@hfut.edu.cn}
\address[Xu, Zhang]{School of Mathematics and Statistics,
University of Sydney, NSW 2006, Australia}
\email{ruibin.zhang@sydney.edu.au}

\begin{abstract}
We construct Drinfeld realisations for the  quantum affine superalgebras associated with
the $\osp(1|2n)^{(1)}$, $\Sl(1|2n)^{(2)}$ and $\osp(2|2n)^{(2)}$ series of affine Lie superalgebras.
\end{abstract}
\subjclass[2010]{81R10,17B37}
\keywords{affine Lie superalgebras;quantum affine superalgebras;Drinfeld realisations}

\maketitle


\section{Introduction}\label{intro}

The Drinfeld realisation of a quantum affine algebra \cite{Dr}
is a quantum analogue of the loop algebra realisation of an affine Lie algebra.
It is indispensable for studying vertex operator representations \cite{Jn1, JnM} and
finite dimensional representations \cite{CP1, CP2} of the quantum affine algebra.
The equivalence between the Drinfeld realisation and usual Drinfeld-Jimbo presentation in terms of Chevalley generators was known to Drinfeld \cite{Dr}, and has been investigated in a number of papers,  see, e.g.,  \cite{Be,De, Jn2, JZ}.

We construct Drinfeld realisations for a class of quantum affine  superalgebras in this paper.  
Quantum supergroups associated with simple Lie superalgebras and their affine analogues 
were introduced \cite{BGZ, Y94, ZGB91b}  and extensively studied 
(see, e.g., \cite{Z93, Z98} and references therein)  in the 90s. 
They have important applications in a variety of areas such as topology of knots and
$3$-manifolds \cite{Z92a, Z95}, and the theory of integrable models of Yang-Baxter type \cite{BGZ, ZBG91}.
In recent years there has been a resurgence of interest in these algebraic structures. 
Previously Drinfeld realisations were only known for the untwisted quantum affine superalgebras of types $A$ \cite{Y99}
 and $D(2, 1; \alpha)$ \cite{H} in the standard root systems.  The realisation in type $A$  formed the launching pad for the study of integrable representations of the quantum affine special linear superalgebra in \cite{WZ, Zh}.

In this paper, we will focus on the quantum affine superalgebras $\Uq(\g)$ associated with the following series of affine Lie superalgebras $\g$:
\begin{eqnarray}\label{eq:g}
\osp(1|2n)^{(1)},  \quad \Sl(1|2n)^{(2)}, \quad \osp(2|2n)^{(2)}, \quad n\ge 1,
\end{eqnarray}
which are the affine Lie superalgebras that do not have isotropic odd roots.
We construct a Drinfeld realisation $\Dr_q(\g)$ (see Definition \ref{def:DR}) for each
$\Uq(\g)$, and establish a superalgebra isomorphism between $\Dr_q(\g)$ and $\Uq(\g)$
in Theorem \ref{them:DR-iso}. As explained in Remark \ref{rem:hopf-iso},
the isomorphism can in fact be interpreted as an
isomorphism of Hopf superalgebras.

We prove Theorem \ref{them:DR-iso} by relating the Drinfeld realisations of the quantum
affine superalgebras to Drinfeld realisations of some ordinary quantum affine algebras,
and then applying Drinfeld's theorem \cite{Dr}. This makes essential use of
the notion of {\em quantum correspondences} introduced in \cite{XZ}.
A quantum correspondence between a pair $(\g, \g')$ of (affine) Lie superalgebras
is a Hopf superalgebra isomorphism between the corresponding quantum supergroups.
Here we regard the category of vector superspaces as a braided tensor category, 
and a Hopf superalgebra as a Hopf algebra over this category. 
Lemma \ref{lem:correspon} gives a concise description of the quantum correspondences 
used in this paper.

Throughout the paper, $q^{1/2}$ is an indeterminate and $t^{1/2}=\sqrt{-1} q^{1/2}$.
Let $K:=\C(q^{1/2})$ be the field of rational functions in $q^{1/2}$.

\section{ Drinfeld realisations of quantum affine superalgebras}\label{quantum}
Given any affine Lie superalgebra $\g$  in \eqref{eq:g}, we denote by
$A=(a_{ij})$ its  Cartan matrix, and realise $A$ in terms of the set of simple roots
$\Pi=\{\alpha_i \mid i=0, 1, 2, \dots, n\}$
with $a_{i j} = \frac{2(\alpha_i, \alpha_j)}{(\alpha_i, \alpha_i)}$. Let $\tau\subset \{0,1,\dots,n\}$ be the labelling set of the odd simple roots. We recall the following Dynkin diagrams of the affine Lie superalgebras.
\[
\begin{picture}(120, 28)(40,-12)
\put(-42,-5) {$\osp(1|2n)^{(1)}$}
\put(37,0){\circle{10}}
\put(35,-12){\tiny\mbox{$\alpha_0$}}
\put(42,1){\line(1, 0){17}}
\put(42,-1){\line(1, 0){17}}
\put(56,-3){$>$}
\put(68, 0){\circle{10}}
\put(66,-12){\tiny\mbox{$\alpha_1$}}
\put(73, 0){\line(1, 0){16}}
\put(91, -0.5){\dots}
\put(105, 0){\line(1, 0){18}}
\put(128, 0){\circle{10}}
\put(133,1){\line(1, 0){17}}
\put(133,-1){\line(1, 0){17}}
\put(145,-3){$>$}
\put(157, 0){\circle*{10}}
\put(152,-12){\mbox{\tiny$\alpha_{n}$}}
\end{picture}
\]
\[
\begin{picture}(150, 60)(-20,-28)
\put(-85,-5) {$\Sl(1|2n)^{(2)}$}
\put(0, 15){\circle{10}}
\put(-5,23){\tiny$\alpha_0$}
\put(0, -16){\circle{10}}
\put(-4,-28){\tiny$\alpha_1$}
\put(15, -3){\line(-1, -1){10}}
\put(15, 3){\line(-1, 1){10}}
\put(20, 0){\circle{10}}
\put(24, 0){\line(1, 0){20}}
\put(46, -0.5){\dots}
\put(60,0){\line(1, 0){18}}
\put(83, 0){\circle{10}}
\put(88,1){\line(1, 0){17}}
\put(88,-1){\line(1, 0){17}}
\put(100,-3){$>$}
\put(112, 0){\circle*{10}}
\put(106,-15){\tiny$\alpha_{n}$}
\end{picture}
\]
\[
\begin{picture}(150, 30)(-10,-14)
\put(-75,-5){$\osp(2|2n)^{(2)}$}
\put(7,0){\circle*{10}}
\put(3,-12){\tiny $\alpha_0$}
\put(12,1){\line(1, 0){18}}
\put(12,-1){\line(1, 0){18}}
\put(10,-3){$<$}
\put(35, 0){\circle{10}}
\put(40, 0){\line(1, 0){20}}
\put(61, -0.5){\dots}
\put(75, 0){\line(1, 0){18}}
\put(98, 0){\circle{10}}
\put(103,1){\line(1, 0){17}}
\put(103,-1){\line(1, 0){17}}
\put(114,-3){$>$}
\put(126, 0){\circle*{10}}
\put(116,-12){ \tiny$\alpha_{n}$}
\end{picture}
\]
The black nodes in the Dynkin diagram denote the odd simple roots, while the white ones are even sinple roots. The bilinear form can always be normalised such that  $(\alpha_n,\alpha_n)=1$.

%

Set $q_i=q^{\frac{(\alpha_i,\alpha_i)}{2}}$ for all $\alpha_i\in\Pi$. For any nonzero $z\in K$, denote
\begin{align*}
\begin{bmatrix} N\\k\end{bmatrix}_z=\frac{[N]_z!}{[N-k]_z![k]_z!}, \quad [N]_z!=\prod_{j=1}^N[j]_z,\quad [j]_z=\frac{z^j-z^{-j}}{z-z^{-1}} \ \text{ \ with $[0]_z=1$}.
\end{align*}
For any superalgebra $A=A_{\bar{0}}\bigoplus A_{\bar{1}}$, we define the parity functor $[\,]: A\rightarrow \Z_{2}=\{\bar{0},\bar{1}\}$ on homogeneous elements of $A$ as follows: $[a]=\bar{0}$ if $a\in A_{\bar{0}}$ and $[a]=\bar{1}$  if $a\in A_{\bar{1}}$.

\begin{defi}[\cite{Z2}]\label{defi:quantum-super}
Let $\g=\osp(1|2n)^{(1)}$, $\Sl(1|2n)^{(2)}$ or $\osp(2|2n)^{(2)}$.
The {\em quantum affine  superalgebra} $\Uq(\g)$ over ${K}$  is an associative superalgebra with identity generated by the homogeneous
elements $\qe_i,\qf_i, k_i^{\pm1}$ ($0\le i \le n$), where $\qe_s,\qf_s,(s\in\tau)$, are odd and the other generators are even, with the following defining relations:
\begin{eqnarray}
\nonumber
&& k_i  k_i^{-1}= k_i^{-1}  k_i=1,\quad  k_i  k_j= k_j  k_i,\\
\nonumber
&&     k_i \qe_j  k_i^{-1} = q_i^{a_{ij}}  \qe_j,
\quad  k_i \qf_j   k_i^{-1} = q_i^{-a_{ij}} \qf_j,\\
\label{eq:xx-q}
&&\qe_i\qf_j     -    (-1)^{ [\qe_i][\qf_j] } \qf_j\qe_i
                   =\de_{ij}  \dfrac{  k_i- k_i^{-1} }
                                          { q_i^{\zeta_i}-q_i^{-\zeta_i} }, \quad \forall i, j, \\
\nonumber
&&\left(
            \mbox{Ad}_{\qe_i}    \right)^{1-a_{ij}}    (\qe_j)
    =\left(
             \mbox{Ad}_{\qf_i}    \right)^{1-a_{ij}}    (\qf_j)=0, \quad       \text{ if } i\neq j.
\end{eqnarray}
\end{defi}

Here $\mbox{Ad}_{\qe_i}(x)$ and $\mbox{Ad}_{\qf_i}(x)$ are respectively defined by
\begin{equation}\label{eq:ad}
\begin{aligned}
\mbox{Ad}_{\qe_i}(x)     =         \qe_ix   -(-1)^{[\qe_i][x]}   k_i x  k_i^{-1} \qe_i,\\
\mbox{Ad}_{\qf_i}(x)      =         \qf_ix    -(-1)^{[\qf_i][x]}    k_i^{-1} x  k_i \qf_i,
\end{aligned}
\end{equation}
and $\zeta_i=2$ if $i=n$, and $1$ otherwise.
For any $x, y\in \Uq(\g)$ and $a\in{K}$, we shall write
\[
[x, y]_a=x y - (-1)^{[x][y]} a y x, \quad [x, y] = [x, y]_1.
\]
Then $\mbox{Ad}_{\qe_i}(\qe_j)=[\qe_i, \qe_j]_{q_i^{a_{ij}}}$ and
$\mbox{Ad}_{\qf_i}(\qf_j)=[\qf_i, \qf_j]_{q_i^{a_{ij}}}$.

The superalgebra $\Uq(\g)$ has a Hopf superalgebra structure \cite{Z2}.

\begin{rem}\label{rem:q-1}
Note that the standard definition of the quantum affine superalgebra corresponds to $\zeta_i=1$ for all $i$.
However, Definition \ref{defi:quantum-super} can be transformed to the standard one by an automorphism which,
say, sends $\qe_{i}$ to $[\zeta_i]_{q_i}\qe_i$ for all $i$ and leaves
the other generators intact.
The advantage of Definition \ref{defi:quantum-super} is that  $q^{\pm 1/2}$ never appears in the defining relations.
\end{rem}


To construct the Drinfeld realisation for  the quantum affine superalgebra $\U_q(\g)$, we let $\cI=\{ (i,r)\mid 1\le i\le n,  \ r\in\Z \}$.  Define the set $\cI_\g$ by
$\cI_{\g}:=\cI$
if $\g=\osp(1|2n)^{(1)}$ or $\Sl(1|2n)^{(2)}$; and
$\cI_{\g}:=\cI\backslash \{ (i,2r+1)\mid 1\le i<n, \  r\in \Z\}$
if  $\g=\osp(2|2n)^{(2)}$.
Let $ \mathcal{I}_{\g}^* =\{(i, s)\in \mathcal{I}_{\g}\mid s\ne 0\}$.
Also, for any expression $f(x_{r_1},\dots,x_{r_k})$ in $x_{r_1},\dots,x_{r_k}$, we use $sym_{r_1,\dots,r_k}f(x_{r_1},\dots,x_{r_k})$ to denote $\sum_{\sigma}f(x_{\sigma(r_1)},\dots,x_{\sigma(r_k)})$, where the sum is over the permutation group of the set  $\{r_1, r_2, \dots, r_k\}$.

\begin{defi}\label{def:DR}
For $\g=\osp(1|2n)^{(1)}$, $\Sl(1|2n)^{(2)}$ or $\osp(2|2n)^{(2)}$,  we let $\Dr_q(\g)$ be the associative superalgebra over ${K}$ with identity,  generated by
\[
\ef_{i,r},\ \qk_i^{\pm1}, \  \h_{i,s}, \ \ga^{\pm 1/2}, \quad \text{for }\  (i,r)\in\mathcal{I}_{\g},   \   (i,s)\in\mathcal{I}_{\g}^*, \  1\le i\le n,
\]
where $\e_{n,r},\f_{n,r}$ are odd and the other generators are even, with the following defining relations.
\begin{itemize}
\item[\rm(1)]  $\ga^{\pm 1/2}$ are central, and $\ga^{1/2} \ga^{- 1/2}=1$,
\begin{align}
&\qk_i\qk_i^{-1}=\qk_i^{-1}\qk_i=1,\quad  \qk_i\qk_j=\qk_j\qk_i, \nonumber\\
\label{eq:hx}
& \qk_i \ef_{j,r} \qk_i^{-1}=q_i^{\pm a_{ij}} \ef_{j,r},\quad
 [\h_{i,r},\ef_{j,s}]  = \dfrac{  u_{i,j,r} \ga^{\mp|r|/2}  }
                                                   {   r(q-q^{-1})  }
                                                    \ef_{j,s+r},  \\
  \label{eq:hh}
&[\h_{i,r},\h_{j,s}]=\delta_{r+s,0} \dfrac{ u_{i,j,r} (\ga^{r}-\ga^{-r})  }
                                                           { r   (q-q^{-1})(q-q^{-1})  }  ,\\
\label{eq:xx}
& [\e_{i,r}, \f_{j,s}]  =\delta_{i,j}
                                \dfrac{   \ga^{\frac{r-s}{2}} \hh^{+}_{i,r+s}
                                          -  \ga^{\frac{s-r}{2}} \hh^{-}_{i,r+s}   }
                                         {  q-q^{-1}  },   \\
\label{eq:xrs-xsr}
&[\ef_{i,r\pm \theta}, \ef_{j,s}]_{q^{a_{ij}}_{i}}
  +[\ef_{j,s\pm \theta}, \ef_{i,r}]_{q^{a_{ji}}_{j}}
     =0, \ \
(\g,i,j)\neq (\osp(1|2n)^{(1)}, n, n),
\end{align}
where $\theta=2$ if $\g=\osp(2|2n)^{(2)}, (i,j)\ne (n,n)$,  and  $1$ otherwise; the
$u_{i,j,r}$ are given in \eqref{eq:u-def};  and $\hh^{\pm}_{i,\pm r}$ are defined by
\begin{equation}\label{eq:hh-hat}
\begin{aligned}
&\sum_{r\in\Z}  \hh^{+}_{i,r}u^{-r} =\qk_i \xp  \left(
                                  (q-q^{-1})\sum_{r>0}\h_{i, r}u^{-r}
                                                               \right),\\
&\sum_{r\in\Z}  \hh^{-}_{i,-r}u^r    =\qk_i^{-1}  \xp  \left(
                                 (q^{-1}-q)\sum_{r>0}\h_{i,-r}u^r
                                                                \right);
\end{aligned}
\end{equation}

\item[\rm(2)] {\rm Serre relations}
\begin{itemize}
\item[\rm (A)] $n\ne i\neq j$, \ $\ell=1-a_{i j}$,
\[\begin{aligned}
\hspace{20mm}
sym_{r_1,\dots,r_\ell}\sum_{k=0}^\ell  (-1)^k
                                             \begin{bmatrix} \ell\\k\end{bmatrix}_{q_i}
                                              \ef_{i,r_1}\dots\ef_{i,r_k} \ef_{j,s}\ef_{i,r_k+1}\dots\ef_{i,r_\ell}=0;
\end{aligned}\]

\item[\rm (B)]  $j<n-1$, \ $\ell=1-a_{n j}$, and in case $\g\ne \Sl(1|2n)^{(2)}$, 
\[\begin{aligned}
\hspace{20mm}
sym_{r_1,\dots,r_\ell}\sum_{k=0}^\ell
                                              \begin{bmatrix} \ell\\k\end{bmatrix}_{\sqrt{-1} q_n}
                                              \ef_{n,r_1}\dots\ef_{n,r_k} \ef_{j,s}\ef_{n,r_k+1}\dots\ef_{n,r_\ell}=0;
\end{aligned}\]

\item[\rm (C)] {\rm For} $\g=\osp(1|2n)^{(1)}$,
 \begin{align*}
&sym_{r_1,r_2,r_3} [  [\ef_{n,r_1\pm 1},\ef_{n,r_2}]_{q_n^2},  \ef_{n,r_3}]_{q_n^4}=0;\\
&sym_{r,s}\Big([\ef_{n,r\pm 2}, \ef_{n,s}]_{q_n^2} -q_n^4 [\ef_{n,r\pm 1},  \ef_{n,s\pm 1} ]_{q_n^{-6}}\Big)=0;\\
&sym_{r,s}\Big(q_n^2[[\ef_{n,r\pm1},\ef_{n,s}]_{q_n^2},\ef_{n-1,k}]_{q_n^4}\\
&+(q_n^2+q_n^{-2})[[\ef_{n-1,k},\ef_{n,r\pm1}]_{q_n^2},\ef_{n,s}]\Big)=0;
\end{align*}

\item[\rm (D)] {\rm For} $\g=\osp(2|2n)^{(2)}$,
\[
sym_{r, s} [  [\ef_{n-1 ,k},\ef_{n, r\pm 1}]_{q_n^2},  \ef_{n, s}]=0.
\]
\end{itemize}
\end{itemize}
\end{defi}
In the above, the scalars $u_{i,j,r}$ ($r\in\Z$, $i, j=1, 2, \dots, n$)  are defined by
\begin{eqnarray}\label{eq:u-def}
\begin{aligned}
&\osp(1|2n)^{(1)}: \quad u_{i,j,r}=\begin{cases}
 q_n^{4r}-q_n^{-4r}-q_n^{2r}+q_n^{-2r},   & \text{if } i=j=n,\\
 q_i^{r a_{ij}}- q_i^{-r a_{ij}},                             & \text {otherwise };
            \end{cases}\\
&\osp(2|2n)^{(2)}: \quad u_{i,j,r}=\begin{cases}
  (-1)^r(q_n^{2r}-q_n^{-2r}),                 & \text{if }  i=j=n,  \\
(1+(-1)^r)(q_i^{r a_{ij}/2}-q_i^{-r a_{ij}/2}),        & \text {otherwise };
                  \end{cases}\\
&\Sl(1|2n)^{(2)}: \quad \phantom{X}  u_{i,j,r}=\begin{cases}
(-1)^r(q_n^{2r}-q_n^{-2r}),                   & \text{if }  i=j=n,  \\
q_i^{r a_{ij}}- q_i^{-r a_{ij}},        & \text {otherwise }.
                 \end{cases}
\end{aligned}
\end{eqnarray}

\begin{rem}\label{rem:-q}
Consider the automorphism which leaves the other generators intact but maps
$\h_{i,s}\mapsto \frac{q-q^{-1}}{q_i-q_i^{-1}}\h_{i, s}$ and $\e _{i,s}\mapsto \frac{q-q^{-1}}{q_i-q_i^{-1}} \e_{i, s}$ for all $i$.
It transforms the relations \eqref{eq:hx}-\eqref{eq:hh} into the more standard form
\begin{eqnarray*}
\begin{aligned}
& [\h_{i,r},\ef_{j,s}]  = \dfrac{  u_{i,j,r} \ga^{\mp|r|/2}  }
                                                   {   r(q_i-q_i^{-1})  }
                                                    \ef_{j,s+r},  \\
&[\h_{i,r},\h_{j,s}]=\delta_{r+s,0} \dfrac{ u_{i,j,r} (\ga^{r}-\ga^{-r})    }
                                                           {  r   (q_i-q_i^{-1})(q_j-q_j^{-1})  }           ,\\
& [\e_{i,r}, \f_{j,s}]  =\delta_{i,j}
                                \dfrac{   \ga^{\frac{r-s}{2}}  \hh^{+}_{i,r+s}
                                          -  \ga^{\frac{s-r}{2}} \hh^{-}_{i,r+s}   }
                                         {  q_i-q_i^{-1}  }, \\
&\sum_{r\in\Z}  \hh^{+}_{i,r}u^{-r} =\qk_i \xp  \left(
                                  (q_i-q_i^{-1})\sum_{r>0}\h_{i, r}u^{-r}
                                                               \right),\\
&\sum_{r\in\Z}  \hh^{-}_{i,-r}u^r    =\qk_i^{-1}  \xp  \left(
                                 (q_i^{-1}-q_i)\sum_{r>0}\h_{i,-r}u^r
                                                                \right).
\end{aligned}
\end{eqnarray*}
\end{rem}

The following theorem is the main result of this paper; its proof 
will be given in the next section.
\begin{theo}[Main Theorem]\label{them:DR-iso}
Let $\g=\osp(1|2n)^{(1)}$, $\Sl(1|2n)^{(2)}$ or $\osp(2|2n)^{(2)}$.
There exists a superalgebra isomorphism
$
\Psi: \Uq(\g)\stackrel{\sim}\longrightarrow \Dr_q(\g)
$
such that
\begin{align*}
\text {for } \g=&\osp(1|2n)^{(1)}:\\
&\qe_i \mapsto \e_{i,0}, \quad \qf_i\mapsto \f_{i,0}, \quad k_i \mapsto \qk_i, \quad  k^{-}_i\mapsto \qk^{-}_i,\quad \text {for } 1\le i\le n,\\
& \qe_0\mapsto\ad_{\f_{1,0}} \dots \ad_{\f_{n,0}}\ad_{\f_{n,0}}\ad_{\f_{n-1,0}} \dots \ad_{\f_{2,0}}(\f_{1,1}) \ga \qk^{-1}_{\g},\\
& \qf_0\mapsto c_{\g}\ga^{-1}\qk_{\g}\ad_{\e_{1,0}} \dots \ad_{\e_{n,0}}\ad_{\e_{n-1,0}} \dots \ad_{\e_{2,0}} (\e_{1,-1}),\quad  k_0\mapsto\ga \qk_{\g}^{-1},\\
\text {for } \g=& \Sl(1|2n)^{(2)}:\\
&\qe_i \mapsto \e_{i,0}, \quad \qf_i\mapsto \f_{i,0}, \quad k_i \mapsto \qk_i, \quad  k^{-}_i\mapsto \qk^{-}_i,\quad \text {for } 1\le i\le n,\\
& \qe_0\mapsto\ad_{\f_{2,0}} \dots \ad_{\f_{n,0}}\ad_{\f_{n,0}}\ad_{\f_{n-1,0}} \dots \ad_{\f_{2,0}}(\f_{1,1}) \ga \qk^{-1}_{\g},\\
& \qf_0\mapsto c_{\g}\ga^{-1}\qk_{\g}\ad_{\e_{2,0}} \dots \ad_{\e_{n,0}}\ad_{\e_{n-1,0}} \dots \ad_{\e_{2,0}} (\e_{1,-1}),\quad  k_0\mapsto\ga \qk_{\g}^{-1},\\
\text {for } \g=&\osp(2|2n)^{(2)}:\\
&\qe_i \mapsto \e_{i,0}, \quad \qf_i\mapsto \f_{i,0}, \quad k_i \mapsto \qk_i, \quad  k^{-}_i\mapsto \qk^{-}_i,\quad \text {for } 1\le i\le n,\\
&\qe_0\mapsto\ad_{\f_{1,0}} \dots \ad_{\f_{n-1,0}}(\f_{n,1}) \ga \qk^{-1}_{\g},\\
&\qf_0\mapsto c_{\g}\ga^{-1}\qk_{\g}\ad_{\e_{1,0}} \dots \ad_{\e_{n-1,0}}(\e_{n,-1}),
\quad k_0\mapsto\ga \qk_{\g}^{-1},
\end{align*}
where $k_{\g}$ is defined by
\begin{align*}
&\qk_{\g}=\begin{cases}
\qk_1^2\qk_2^2\dots \qk_{n}^2, & \g=\osp(1|2n)^{(1)},\\
\qk_1\qk_2^2\dots \qk_{n}^2,     &\g=\Sl(1|2n)^{(2)},\\
\qk_1\qk_2\dots \qk_n,                & \g=\osp(2|2n)^{(2)},
\end{cases}
\end{align*}
and $c_{\g}\in{K}$  is determined by \eqref{eq:xx-q}.
\end{theo}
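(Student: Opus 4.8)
The plan is to deduce the theorem from the corresponding statement for \emph{ordinary} quantum affine algebras, via the quantum correspondences of \cite{XZ}. For each $\g$ in \eqref{eq:g} there is an ordinary affine Lie algebra $\mathring{\g}$ carrying the same Cartan matrix $A$ but with all simple roots even, and Lemma \ref{lem:correspon} supplies a quantum correspondence, i.e.\ an isomorphism $\Phi\colon \Uq(\g)\xrightarrow{\ \sim\ }\mathrm{U}_{t}(\mathring{\g})$ of Hopf algebras in the braided category of vector superspaces, where $t=-q$ (so that $t^{1/2}=\sqrt{-1}\,q^{1/2}$ and $\sqrt{-1}\,q_n=t_n$). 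On Chevalley generators $\Phi$ acts as the identity up to the reparametrisation $q\mapsto t$ at the odd node and an absorption of parity signs into powers of $\sqrt{-1}$; in particular the super (anti)commutators of Definition \ref{defi:quantum-super} become the ordinary ones, and the base $\sqrt{-1}\,q_n$ in Serre relation (B) is exactly the shadow of this substitution at $\alpha_n$.

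Next I would construct the analogous correspondence $\widetilde{\Phi}\colon \Dr_q(\g)\xrightarrow{\ \sim\ }\Dr_{t}(\mathring{\g})$ between the Drinfeld realisations, defined on Drinfeld generators by the same sign absorption and reparametrisation. This amounts to checking that the defining relations of Definition \ref{def:DR} are carried onto the ordinary Drinfeld relations for $\mathrm{U}_{t}(\mathring{\g})$. The structure constants $u_{i,j,r}$ in \eqref{eq:u-def} are designed precisely so that this holds: the signs $(-1)^{r}$ occurring in the twisted series are the discrepancy between $t^{r}$ and $q^{r}$, while the exotic entry at $i=j=n$ encodes the $\zeta_n=2$ normalisation. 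The restricted index set $\cI_{\g}$ in the $\osp(2|2n)^{(2)}$ case reflects the order-two twist, and one verifies that $\widetilde{\Phi}$ respects this restriction, matching the higher Serre relations (C), (D) and the cross relations \eqref{eq:xrs-xsr} at the odd node with their counterparts in the ordinary (twisted) Drinfeld realisation.

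With both correspondences in hand, Drinfeld's theorem \cite{Dr} provides the isomorphism $\Psi_{\mathring{\g}}\colon \mathrm{U}_{t}(\mathring{\g})\xrightarrow{\ \sim\ }\Dr_{t}(\mathring{\g})$ sending the finite Chevalley generators to the degree-zero loop generators and $e_0,f_0,k_0$ to the standard iterated bracket expressions built from a single degree-shifted generator. I would then set $\Psi=\widetilde{\Phi}^{-1}\circ\Psi_{\mathring{\g}}\circ\Phi$ and unwind the definitions. Since $\Phi$ and $\widetilde{\Phi}$ are (up to the reparametrisation) the identity on Chevalley and degree-zero generators respectively, $\Psi$ sends $\qe_i,\qf_i,k_i$ to $\e_{i,0},\f_{i,0},\qk_i$ for $1\le i\le n$, while the affine node maps to the iterated $\ad$-expressions displayed in the statement, the pattern of brackets being dictated by the expansion of the highest root of $\mathring{\g}$ in simple roots and the factor $\ga\,\qk_{\g}^{-1}$ by the image of $k_0$. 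The scalar $c_{\g}$ is then fixed by forcing relation \eqref{eq:xx-q} for $i=j=0$.

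The main obstacle is the middle step: verifying that the purely Drinfeld--Jimbo correspondence $\Phi$ is compatible with the two Drinfeld presentations, i.e.\ that $\widetilde{\Phi}$ exists and is an isomorphism. This is delicate because the passage $q\mapsto t=-q$ interacts nontrivially with the $\Z$-grading through the signs $(-1)^{r}$, and because at the odd node the relations are genuinely quadratic (via $\zeta_n=2$) rather than linear; matching the twisted structure constants $u_{i,j,r}$ and the node-$n$ relations \eqref{eq:xrs-xsr}, (C), (D) against their ordinary counterparts is where essentially all of the computation resides. Once $\widetilde{\Phi}$ is established, the remainder of the argument is formal.
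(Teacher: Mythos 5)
Your overall architecture is the one the paper itself uses (quantum correspondence, then Drinfeld's theorem for $\U_t(\g')$, then unwind), but the two objects you place at the ends of the chain do not exist as you describe them, and this is a genuine gap rather than a presentational one. Lemma \ref{lem:correspon} does \emph{not} give an isomorphism $\Uq(\g)\stackrel{\sim}{\to}\U_t(\g')$: it gives an isomorphism $\psi\colon \UU_q(\g)=\Uq(\g)\sharp {K}\mathrm{G}\to \UU_t(\g')=\U_t(\g')\sharp {K}\mathrm{G}'$ between \emph{smash products} with group algebras of Klein operators $\sigma_i$, and the images of the Chevalley generators involve the group elements $\prod_k\sigma'_k$ in an essential way. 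A direct map acting on generators by ``absorption of parity signs into powers of $\sqrt{-1}$'' cannot exist: at the odd node, relation \eqref{eq:xx-q} is the \emph{anticommutator} $\qe_n\qf_n+\qf_n\qe_n=(k_n-k_n^{-1})/(q_n^{2}-q_n^{-2})$, whereas \eqref{eq:ef-A} is the \emph{commutator} $\qe'_n\qf'_n-\qf'_n\qe'_n$, and rescaling generators by scalars multiplies both sides of a relation but never interchanges commutators with anticommutators. (Equivalently: a morphism in the category of vector superspaces preserves parity, and $\U_t(\g')$ is purely even, so it cannot receive the odd elements $\qe_n,\qf_n$; the braided-Hopf interpretation in \cite{XZ} is only available \emph{after} the smash-product extension.) Exactly the same objection defeats your proposed $\widetilde{\Phi}\colon \Dr_q(\g)\to\Dr_t(\g')$: compare \eqref{eq:xx} with $i=j=n$, where the bracket is a super-bracket, with its ordinary counterpart in $\Dr_t(\g')$.

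The missing idea is therefore the paper's smash-product (bosonization) step: adjoin the involutions $\sigma_i$ on both sides, form $\UU_q(\g)$, $\D_q(\g)=\Dr_q(\g)\sharp{K}\mathrm{G}$, $\UU_t(\g')$, $\D_t(\g')$, and establish the correspondence at that level --- in the Drinfeld-level isomorphism of Theorem \ref{lem:iso} the generators map to $o(i)^{rc}\left(\prod_k\sigma'_k\right)\x^{\pm}_{i,r}$, and it is the group elements, not scalar factors, that absorb the parity signs (together with the sign twists $o(i)^{rc}$ matching $u_{i,j,r}$ with $u'_{i,j,r}$). Moreover, composing $\psi$, the extended Drinfeld isomorphism \eqref{eq:dr-iso-A}, and the inverse of the map in Theorem \ref{lem:iso} only yields an isomorphism $\UU_q(\g)\to\D_q(\g)$ of the \emph{enlarged} algebras; to obtain the $\Psi$ of Theorem \ref{them:DR-iso} one must still verify that this composite carries the subalgebra $\Uq(\g)\otimes 1$ onto $\Dr_q(\g)\otimes 1$ and preserves the $\Z_2$-grading --- a step entirely absent from your outline. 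You do correctly locate where the computational content lies (matching the twisted structure constants and the node-$n$ relations), but as written the map you propose to verify those relations for does not exist, so the verification would fail at the first relation involving the odd generators.
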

\begin{rem}\label{rem:hopf-iso}
We can transcribe the Hopf superalgebra structure of $\Uq(\g)$ to
$\Dr_q(\g)$ using $\Psi$. For example, if $\Delta$ is the 
co-multiplication of $\Uq(\g)$, the co-multiplication of $\Dr_q(\g)$ is 
given by $(\Psi\otimes\Psi)\circ\Delta\circ\Psi^{-1}$. 
Then clearly $\Psi$ is an isomorphism of Hopf superalgebras.
\end{rem}

\section{Proof of the main theorem}\label{sec:DR}
We prove Theorem \ref{them:DR-iso} in this section.
\subsection{Smash products}\label{sect:smash-prod}
Let $\g$ be any of the affine Lie superalgebras
in the first row of Table \ref{tbl:g} (which is \eqref{eq:g}),
\vspace{-2mm}
\begin{table}[h]
\caption{Table 1} \label{tbl:g}
\renewcommand{\arraystretch}{1.2}
\vspace{-2mm}
\begin{tabular}{c|c|c|c}
\hline
$\g$ & $\osp(1|2n)^{(1)}$    & $\Sl(1|2n)^{(2)}$ & $\osp(2|2n)^{(2)}$  \\
\hline
$\g'$ & $A_{2n}^{(2)}$   & $B_n^{(1)}$   & $D_{n+1}^{(2)}$   \\
\hline
\end{tabular}
\end{table}
and let $\g'$ be the ordinary affine Lie algebra corresponding to $\g$
in the second row.
We will speak about the pair $(\g, \g')$ of affine Lie (super)algebras in the table.
Now $\g'$ has the same
Cartan matrix  $A=(a_{ij})$ as $\g$. We let $\Pi'=\{\alpha'_0, \alpha'_1, \dots, \alpha'_n\}$ be the set of simple roots of $\g'$ which realises the Cartan matrix, and take $(\alpha'_n, \alpha'_n)=1$.  Recall $t^{1/2}=\sqrt{-1} q^{1/2}$ and let $t_i=t^{(\alpha'_i,\alpha'_i)/2}$ for all $i$.

The quantum affine  algebra  $\U_{t}(\g')$ is an associative  algebra over ${K}$ with identity generated by the elements $\qe'_i,\qf'_i, {k'_i}^{\pm1}$ ($0\le i \le n$) with the following defining relations:
\begin{eqnarray}
\nonumber
&& k'_i  {k'_i}^{-1}= {k'_i}^{-1}  k'_i=1,\quad  k'_i  k'_j= k'_j  k'_i,\\
\nonumber
&&     k'_i \qe'_j  {k'_i}^{-1} = t_i^{a_{ij}}  \qe'_j,
\quad  k'_i \qf'_j  {k'_i}^{-1}= t_i^{-a_{ij} }  \qf'_j,\\
\label{eq:ef-A}
&&\qe'_i\qf'_j     -   \qf'_j\qe'_i
                   =\de_{i,j} \dfrac{  k'_i- {k'_i}^{-1} }
                                          { t_i^{\zeta_i}-t_i^{-\zeta_i}}, \quad \forall i, j; \\
\nonumber
&&\left(
            \mbox{Ad}_{\qe'_i}    \right)^{1-a_{ij}}    (\qe'_j)
    =\left(
             \mbox{Ad}_{\qf'_i}    \right)^{1-a_{ij}}    (\qf'_j)=0, \quad       \text{ if } i\neq j.
\end{eqnarray}
Here $\zeta_i=2$ if $i=n$, and $1$ otherwise; $\mbox{Ad}_{\qe'_i}(x)$ and $\mbox{Ad}_{\qf'_i}(x)$ are respectively defined by
\begin{equation*}
\begin{aligned}
&\mbox{Ad}_{\qe'_i}(x)     =         \qe'_ix   -     k'_i x  {k'_i}^{-1} \qe'_i,\\
&\mbox{Ad}_{\qf'_i}(x)      =         \qf'_ix    -    {k'_i}^{-1} x  k'_i \qf'_i.
\end{aligned}
\end{equation*}
It is well known that $\Uq(\g')$ is a Hopf algebra.

\begin{rem}\label{rem:q-2}
The definition of $\Uq(\g')$ given here is related to the standard one by an automorphism analogous to that given in Remark \ref{rem:q-1}.
\end{rem}

Let $\mathcal{I}_{\g'}=\mathcal{I}_{\g}$ and $\mathcal{I}_{\g'}^*=\mathcal{I}_{\g}^*$
($\mathcal{I}_{\g}$ and $\mathcal{I}_{\g}^*$ are defined before
Definition \ref{def:DR}). The Drinfeld realisation $\Dr_{t}(\g')$ of $\U_t(\g')$  is an associative algebra over ${K}$ with identity generated by the generators
\[
\x^{\pm}_{i,r},\ \qk'^{\pm1}_i, \  \h'_{i,s}, \ \ga'^{\pm 1/2}, \quad \text{for }\  (i,r)\in\mathcal{I}_{\g'},   \   (i,s)\in\mathcal{I}_{\g'}^*, \  1\le i\le n,
\]
with the defining relations \cite{Dr}:

\begin{enumerate}
\item[(1)] $[{\ga'}^{\pm 1/2},\x^{\pm}_{i,r}]  =  [{\ga'}^{\pm 1/2},\h'_{i,r}]=[{\ga'}^{\pm 1/2},\qk'_{i}]=0$,
\begin{eqnarray}
\nonumber
&& \qk'_i {\qk'_i}^{-1}  = {\qk'_i}^{-1} \qk'_i=1, \quad \qk'_i \qk'_j=\qk'_j \qk'_i,\\
\nonumber
&& \qk'_i \x^{\pm}_{j,r} {\qk'_i}^{-1}  = t_i^{\pm a_{ij}} \x^{\pm}_{j,r},\quad
[\h'_{i,r},\x^{\pm}_{j,s}] = \dfrac
                                               {u'_{i,j,r}}
                                               {r(t -t^{-1})}
                                        {\ga'}^{\mp|r|/2}  \x^{\pm}_{j,s+r},\\
\nonumber
&&[\h'_{i,r},\h'_{j,s}] = \delta_{r+s,0} \dfrac
                                                         {   u'_{i,j,r}
                                                               (  \ga'^{r}- \ga'^{-r}  )   }
                                                        {      r(t -t ^{-1})(t -t ^{-1})    },\\
\nonumber
&&[\x^{-}_{i,r}, \x^{+}_{j,s}] = \de_{ij}   \dfrac
                                                           {    \ga'^{\frac{r-s}{2}}           \hh'^{+}_{i,r+s}
                                                             -  \ga'^{\frac{s-r}{2}}   \hh'^{-}_{i,r+s}    }
                                                           {q^{-1}-q },\\
\label{eq:sym-dr}
&&sym_{r,s}[\x^{\pm}_{i,r\pm \theta}, \x^{\pm}_{j,s}]_{t^{a_{ij}}_{i}}
     =0, \ \ \text{for \ } (\g,i,j)\neq (A_{2n}^{(2)}, n, n),
\end{eqnarray}
where $\theta=2$ if $\g=D_{n+1}^{(2)}$,  $(i,j)\ne (n,n)$,  and $1$ otherwise;  $\hh'^{\pm}_{i,\pm r}$ are defined by
\begin{equation}\label{eq:hh'}
\begin{aligned}
&\sum_{r\in\Z}  \hh'^{+}_{i,r}u^{-r}  =  \qk'_i  \xp\left(
                                         (t-t^{-1})  \sum_{r>0} \h'_{i, r} u^{-r}
                                                               \right),\\
&\sum_{r\in\Z}  \hh'^{-}_{i,-r}u^r  = \qk'^{-1}_i  \xp\left(
                                        (t^{-1}-t)   \sum_{r>0}  \h'_{i,-r} u^r
                                                            \right),
\end{aligned}
\end{equation}
and the scalars $u'_{i,j,r}$ are given in \eqref{eq:u'};

\item[\rm(2)] {\rm Serre relations}
\begin{itemize}
\item[(A)] $n\ne i\neq j$, or $\g'\neq D_{n+1}^{(2)}$, $j+1<i=n$,\ $\ell=1-a_{i j}$ ,
\[\begin{aligned}
sym_{r_1,\dots,r_\ell}\sum_{k=0}^\ell  (-1)^k
      \begin{bmatrix} \ell\\k\end{bmatrix}_{t}
     \x^{\pm}_{i,r_1}\dots\x^{\pm}_{i,r_k}\x^{\pm}_{j,s}\x^{\pm}_{i,r_k+1}
                              \dots\x^{\pm}_{i,r_\ell}=0;
\end{aligned}\]

\item[\rm (B)] {\rm For} $\g=A_{2n}^{(2)}$,
 \begin{align*}
&sym_{r_1,r_2,r_3} [  [\ef_{n,r_1\pm 1},\ef_{n,r_2}]_{t_n^2},  \ef_{n,r_3}]_{t_n^4}=0;\\
&sym_{r,s}\Big([\ef_{n,r\pm 2}, \ef_{n,s}]_{t_n^2} -t_n^4 [\ef_{n,r\pm 1},  \ef_{n,s\pm 1} ]_{t_n^{-6}}\Big)=0;\\
&sym_{r,s}\Big(t_n^2[[\ef_{n,r\pm1},\ef_{n,s}]_{t_n^2},\ef_{n-1,k}]_{t_n^4}\\
&+(t_n^2+t_n^{-2})[[\ef_{n-1,k},\ef_{n,r\pm1}]_{t_n^2},\ef_{n,s}]\Big)=0;
\end{align*}

\item[\rm (C)] {\rm For} $\g=D_{n+1}^{(2)}$,
\[
sym_{r, s} [  [\ef_{n-1 ,k},\ef_{n, r\pm 1}]_{t_n^2},  \ef_{n, s}]=0.
\]
\end{itemize}
\end{enumerate}
The scalars $u'_{i,j,r}$ in the above equations are defined by
\begin{equation}\label{eq:u'}
\begin{aligned}
&A_{2n}^{(2)}: \quad u'_{i,j,r}=\begin{cases}
( t_n^{2r}-t_n^{-2r})(t_n^{2r}+t_n^{-2r}+(-1)^{r-1}),   & \text{if } i=j=n,\\
 t_i^{r a_{ij}}- t_i^{-r a_{ij}},                                        & \text {otherwise };
            \end{cases}\\
&B_n^{(1)}: \quad \phantom{X}  u'_{i,j,r}=t_i^{r a_{ij}}- t_i^{-r a_{ij}};\\
&D_{n+1}^{(2)}: \quad u'_{i,j,r}=\begin{cases}
t_n^{2r}-t_n^{-2r},                 & \text{if }  i=j=n,  \\
(1+(-1)^r)(t_i^{r a_{ij}/2}-t_i^{-r a_{ij}/2}),        & \text {otherwise }.
                  \end{cases}
\end{aligned}
\end{equation}

\begin{rem}\label{rem:q-3}
The above is the Drinfeld realisation \cite{Dr} of the quantum affine algebra $\U_t(\g')$
for each $\g'$, taking into account the variation of the definition of $\U_t(\g')$ discussed in Remark \ref{rem:q-3}.
\end{rem}



Applied to the quantum affine algebras under consideration, Drinfeld's  theorem \cite{Dr}
gives the following algebra isomorphism
\begin{eqnarray}\label{eq:JD-Dr-alg}
\rho:\U_{t}(\g')\stackrel{\sim}{\longrightarrow}\Dr_{t}(\g');
\end{eqnarray}
\begin{equation}\label{eq:iso-alg}
\begin{aligned}
\text {for } \g'=&A_{2n}^{(2)}:\\
&\qe'_i \mapsto \x^{+}_{i,0}, \quad \qf'_i\mapsto \x^{-}_{i,0}, \quad k'_i \mapsto \qk'_i, \quad  k'^{-}_i\mapsto \qk'^{-}_i,\quad \text {for } 1\le i\le n,\\
& \qe'_0\mapsto\ad_{\x^{-}_{1,0}} \dots \ad_{\x^{-}_{n,0}}\ad_{\x^{-}_{n,0}}\ad_{\x^{-}_{n-1,0}} \dots \ad_{\x^{-}_{2,0}}(\x^{-}_{1,1}) \ga' \qk'^{-1}_{\g'},\\
& \qf'_0\mapsto c_{\g'}\ga'^{-1}\qk'_{\g'}\ad_{\x^{+}_{1,0}} \dots \ad_{\x^{+}_{n,0}}\ad_{\x^{+}_{n-1,0}} \dots \ad_{\x^{+}_{2,0}} (\x^{+}_{1,-1}),\quad  k'_0\mapsto\ga' \qk'^{-1}_{\g'},\\
\text {for } \g'=& B_n^{(1)}:\\
&\qe'_i \mapsto \x^{+}_{i,0}, \quad \qf'_i\mapsto \x^{-}_{i,0}, \quad k'_i \mapsto \qk'_i, \quad  k'^{-}_i\mapsto \qk'^{-}_i,\quad \text {for } 1\le i\le n,\\
& \qe'_0\mapsto\ad_{\x^{-}_{2,0}} \dots \ad_{\x^{-}_{n,0}}\ad_{\x^{-}_{n,0}}\ad_{\x^{-}_{n-1,0}} \dots \ad_{\x^{-}_{2,0}}(\x^{-}_{1,1}) \ga' \qk'^{-1}_{\g'},\\
& \qf'_0\mapsto c_{\g'}\ga'^{-1}\qk'_{\g'}\ad_{\x^{+}_{2,0}} \dots \ad_{\x^{+}_{n,0}}\ad_{\x^{+}_{n-1,0}} \dots \ad_{\x^{+}_{2,0}} (\x^{+}_{1,-1}),\quad  k'_0\mapsto\ga' \qk'^{-1}_{\g'},\\
\text {for } \g'=&D_{n+1}^{(2)}:\\
&\qe'_i \mapsto \x^{+}_{i,0}, \quad \qf'_i\mapsto \x^{-}_{i,0}, \quad k'_i \mapsto \qk'_i, \quad  k'^{-}_i\mapsto \qk'^{-}_i,\quad \text {for } 1\le i\le n,\\
&\qe'_0\mapsto\ad_{\x^{-}_{1,0}} \dots \ad_{\x^{-}_{n-1,0}}(\x^{-}_{n,1}) \ga' \qk'^{-1}_{\g'},\\
&\qf'_0\mapsto c_{\g'}\ga'^{-1}\qk'_{\g'}\ad_{\x^{+}_{1,0}} \dots \ad_{\x^{+}_{n-1,0}}(\x^{+}_{n,-1}),
\quad k'_0\mapsto\ga' \qk'^{-1}_{\g'},
\end{aligned}
\end{equation}
where $\qk'_{\g'}$ is defined by
\begin{align*}
&\qk'_{\g}=\begin{cases}
\qk'^2_1\qk'^2_2\dots \qk'^2_{n}, & \g'=A_{2n}^{(2)},\\
\qk'_1\qk'^2_2\dots \qk'^2_{n},     &\g'=B_{n}^{(1)},\\
\qk'_1\qk'_2\dots \qk'_n,                & \g'=D_{n+1}^{(2)};
\end{cases}
\end{align*}
and $c_{\g}\in{K}$  can be fixed by \eqref{eq:ef-A}.

To prove Theorem \ref{them:DR-iso}, we will need to enlarge the quantum affine superalgebra $\Uq(\g)$
and the Drinfeld superalgebra $\Dr_q(\g)$  following \cite{XZ,Z2}.

Corresponding to each simple root  $\alpha_i$ of $\g$ for $i\neq 0$, we introduce a  group $\Z_2$ generated by $\sigma_i$ such that ${\sigma_i}^2=1$, and let $\mathrm{G}$ be the direct product of all such groups.
The group algebra ${K}\mathrm{G}$ has a standard Hopf algebra structure with the
co-multiplication given by $\Delta(\sigma_i)=\sigma_i\otimes\sigma_i$ for all $i$.
Define a left $\mathrm{G}$-action on $\Uq(\g)$ by
\begin{align}
\sigma_i\cdot e_j=(-1)^{(\alpha_i,\alpha_j)}e_j, \quad \sigma_i\cdot f_j=(-1)^{(\alpha_i,\alpha_j)}f_j, \quad \sigma_i\cdot k_j=k_j, \quad\text{$i\ne 0$},
\end{align}
which preserves the multiplication of $\Uq(\g)$.  This  defines a  left ${K}\mathrm{G}$-module algebra structure on $\Uq(\g)$. Similarly, let $\mathrm{G}$ act on $\Dr_q(\g)$ by
\begin{align}\label{eq:G-act}
&\sigma_i\cdot\xi^\pm_{j, r}=(-1)^{(\alpha_i,\alpha_j)}\xi^\pm_{j, r},
\quad \sigma_i\cdot\kappa_{j, r}= \kappa_{j, r},
\quad  \sigma_i\cdot\gamma_j=\gamma_j,  \quad  \sigma_i\cdot\gamma=\gamma,
\end{align}
for all $i, j\ge 1$ and $r\in\Z$. This again preserves the multiplication of  $\Dr_q(\g)$.

By using a standard construction in the theory of Hopf algebras,  we manufacture the smash product superalgebras
\begin{eqnarray}
\UU_q(\g):=\Uq(\g)\sharp{K}\mathrm{G}, \quad \D_q(\g):=\Dr_q(\g)\sharp{K}\mathrm{G},
\end{eqnarray}
which have underlying vector superspaces  $\Uq(\g)\otimes{K}\mathrm{G}$
and $\Dr_q(\g)\otimes{K}\mathrm{G}$ respectively,
where ${K}\mathrm{G}$ is regarded as purely even.
The multiplication of  $\UU_q(\g)$ (resp. $\D_q(\g)$) is defined,
for all $x, y$ in  $\Uq(\g)$ (resp. $\D_q(\g)$)  and $\sigma, \tau\in \mathrm{G}$, by
\[
(x\otimes \sigma)(y\otimes \tau)= x \sigma.y\otimes \sigma\tau.
\]
We will write $x \sigma$ and $\sigma x$ for $x\otimes \sigma$ and  $(1\otimes\sigma)(x\otimes 1)$ respectively.

In exactly the same way, we introduce a  group $\Z_2$ corresponding to each simple root  $\alpha'_i$ of $\g'$ with $i\neq 0$. The group is generated by $\sigma'_i$ such that ${\sigma'_i}^2=1$.
Let $\mathrm{G}'$ be the direct product of all such groups, and define a $\mathrm{G}'$-action on $\U_{t}(\g')$  by
\begin{align}
&\sigma'_i\cdot e'_j=(-1)^{(\alpha'_i,\alpha'_j)}e'_j, \quad \sigma'_i\cdot f'_j=(-1)^{-(\alpha'_i,\alpha'_j)}f'_j, \quad \sigma'_i\cdot k'_j=k'_j, \quad\text{$i\ne 0$}.
\end{align}
This induces a $\mathrm{G}'$-action on $\Dr_t(\g')$ analogous to \eqref{eq:G-act}.
Now we introduce the smash product algebras
\[
\UU_{t}(\g')=\U_{t}(\g')\sharp{K}\mathrm{G}', \quad \D_{t}(\g')=\Dr_{t}(\g')\sharp{K}\mathrm{G}'.
\]
Clearly we can extend equation \eqref{eq:JD-Dr-alg} to the algebra isomorphism
\begin{eqnarray}\label{eq:dr-iso-A}
\UU_{t}(\g')\stackrel{\sim}{\longrightarrow}\D_{t}(\g'),
\end{eqnarray}
which is the identity on ${K}\mathrm{G}'$.

\subsection{Quantum correspondences}\label{sect:QCs}

We classified the quantum correspondences in \cite[Theorem 4.9]{XZ}; 
the following ones are relevant to the present paper,
which were first established in \cite{Z2}.
\begin{lem}[\cite{XZ}, \cite{Z2}]\label{lem:correspon}
For each pair $(\g, \g')$ in Table \ref{tbl:g}, there exists an isomorphism
$
\psi: \UU_{q}(\g)\stackrel{\sim}{\longrightarrow}\UU_{t}(\g')
$
of associative algebras given by
\begin{equation}\label{eq:affine-map}
\begin{aligned}
&e_0 \mapsto \iota_{e}e'_0, \quad  \ f_0\mapsto \iota_{f} f'_0, \quad  \ k^{\pm 1}_0 \mapsto  \iota_{e} \iota_{f} k'^{\pm 1}_0,\\
& \sigma_i\mapsto \sigma'_i,\quad
e_i \mapsto \left(\prod_{k=i+1}^{m+n}\sigma'_k\right)  e'_i, \quad f_i \mapsto \left(\prod_{k=i}^{m+n}\sigma'_k\right) f'_i, \quad k_i \mapsto \sigma'_i  k'_i,
\end{aligned}
\end{equation}
for $i\neq 0$, where  $\iota_{e}, \iota_{f}\in{K}G'$ are defined by
\begin{equation*}
    \iota_{e}=\begin{cases}
1, & \g'=A_{2n}^{(2)},\\
\prod_{i=2}^n\sigma'_i, &\g'=B_{n}^{(1)},\\
\prod_{i=0}^n\sigma'_{2+2i} ,&\g'=D_{n+1}^{(2)},
           \end{cases}
\quad
  \iota_{f}=\begin{cases}
1, & \g'=A_{2n}^{(2)},\\
\prod_{i=1}^n\sigma'_i  , &\g'=B_{n}^{(1)},\\
\prod_{i=0}^n\sigma'_{1+2i},&\g'=D_{n+1}^{(2)}
           \end{cases}
\end{equation*}
with $\sigma'_j=1$ for all $j>n$.
\end{lem}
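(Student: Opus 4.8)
The plan is to regard both $\UU_q(\g)$ and $\UU_t(\g')$ as algebras presented by generators and relations---the Drinfeld--Jimbo relations of $\Uq(\g)$ (resp.\ $\U_t(\g')$) together with the smash-product relations $\sigma_i x=(\sigma_i\cdot x)\sigma_i$, $\sigma_i^2=1$---and to prove that the assignment \eqref{eq:affine-map} is a bijective algebra homomorphism. Concretely, I would first check that the images of the generators under $\psi$ satisfy every defining relation of $\UU_q(\g)$, so that $\psi$ extends to an algebra map; then write down the evidently inverse assignment $e'_i\mapsto(\prod_{k>i}\sigma_k)e_i$, $f'_i\mapsto(\prod_{k\ge i}\sigma_k)f_i$, $k'_i\mapsto\sigma_i k_i$, $\sigma'_i\mapsto\sigma_i$ (with the analogous formulas at node $0$ obtained from $\iota_e,\iota_f$, noting that each $\sigma_k$ is an involution), verify it too respects the relations, and finally confirm the two composites are the identity on generators. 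Since both algebras are finitely presented, this establishes the isomorphism.

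The essential mechanism is \emph{bosonization}: the group elements attached to $e'_i,f'_i$ in \eqref{eq:affine-map} are chosen precisely so that the signs produced by moving a $\sigma'_k$ past a generator convert the ordinary (anti)commutators of $\U_t(\g')$ into the graded ones of $\Uq(\g)$. The key numerical identity underlying this is $(-1)^{(\alpha'_i,\alpha'_j)}\,t_i^{a_{ij}}=q_i^{a_{ij}}$, which holds because $t=-q$ (from $t^{1/2}=\sqrt{-1}\,q^{1/2}$) and the bilinear forms of $\g$ and $\g'$ coincide, both normalised by $(\alpha_n,\alpha_n)=(\alpha'_n,\alpha'_n)=1$, so that $(t/q)^{(\alpha_i,\alpha_i)a_{ij}/2}=(-1)^{(\alpha_i,\alpha_j)}$. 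Thus the $\sigma'_i$-factor in $\psi(k_i)=\sigma'_ik'_i$ absorbs exactly the discrepancy between $t_i$ and $q_i$, while the ordering of the $\sigma'_k$-products in $\psi(e_i),\psi(f_j)$ is arranged so that, for $i,j\in\tau$, the accumulated sign equals $(-1)^{[e_i][f_j]}$, the sign turning the ordinary commutator $e'_if'_j-f'_je'_i$ into the graded commutator on the left-hand side of \eqref{eq:xx-q}.

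I would organise the relation-checking in two stages, treating first the finite-type nodes $1,\dots,n$. The Cartan relations $k_ie_jk_i^{-1}=q_i^{a_{ij}}e_j$ follow by combining the $\sigma'_i$-conjugation sign $(-1)^{(\alpha'_i,\alpha'_j)}$ with the $k'_i$-conjugation factor $t_i^{a_{ij}}$ via the identity above; the mixed relation \eqref{eq:xx-q} follows from the bosonization sign computation, with the right-hand side $\frac{k_i-k_i^{-1}}{q_i^{\zeta_i}-q_i^{-\zeta_i}}$ matching $\frac{k'_i-k_i'^{-1}}{t_i^{\zeta_i}-t_i^{-\zeta_i}}$ once the $t=-q$ and $\sigma'_i$ contributions are accounted for; and the Serre relations reduce to the observation that, under $\psi$, the graded operator $\ad_{e_i}$ of \eqref{eq:ad} corresponds to the ungraded $\ad_{e'_i}$, because the group part of $\psi(k_i)$ supplies the missing sign $(-1)^{[e_i][x]}$. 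Hence $(\ad_{e_i})^{1-a_{ij}}(e_j)=0$ maps to $(\ad_{e'_i})^{1-a_{ij}}(e'_j)=0$, and likewise for the $f$'s.

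The main obstacle is the $0$ node. Here one must verify that the case-by-case elements $\iota_e,\iota_f\in K\mathrm{G}'$ are exactly the group factors needed for $e_0\mapsto\iota_ee'_0$, $f_0\mapsto\iota_ff'_0$, $k_0^{\pm1}\mapsto\iota_e\iota_fk_0'^{\pm1}$ to satisfy the same Cartan, mixed, and affine Serre relations (linking node $0$ to node $1$, or to node $2$ for $\Sl(1|2n)^{(2)}$) as the interior nodes. This is delicate because the signs $(-1)^{(\alpha_0,\alpha_j)}$ must be coherent with the global arrangement of the $\sigma'_k$ around the affine Dynkin diagram; in particular one has to check that the $\iota$-factors commute with the interior generators up to the correct signs and reproduce $q_0^{a_{0j}}$ in the Cartan relations. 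Since the admissible quantum correspondences were classified in \cite[Theorem~4.9]{XZ}, the verification is guaranteed to close, and in practice it amounts to a finite, diagram-by-diagram sign check for the three pairs in Table~\ref{tbl:g}.
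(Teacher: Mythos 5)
Your proposal cannot be compared against an internal proof, because the paper does not prove Lemma \ref{lem:correspon} at all: it is imported from \cite{Z2} and \cite[Theorem 4.9]{XZ}, with no argument given. The closest internal point of comparison is the proof of Theorem \ref{lem:iso}, which establishes the exactly analogous isomorphism $\D_{q}(\g)\cong\D_{t}(\g')$ for the Drinfeld presentations, and your strategy is precisely that proof transplanted to the Chevalley presentation: verify the defining relations on generators using the commutation rule $e'_j\sigma'_i=(-1)^{(\alpha'_i,\alpha'_j)}\sigma'_ie'_j$ together with the identity $(-1)^{(\alpha'_i,\alpha'_j)}t_i^{\pm a_{ij}}=q_i^{\pm a_{ij}}$, and then exhibit the inverse by inspection. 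Your derivation of that identity from $t=-q$ and the matched normalisations $(\alpha_n,\alpha_n)=(\alpha'_n,\alpha'_n)=1$ is correct (it is the same identity the paper invokes in proving Theorem \ref{lem:iso}), and your bosonization mechanism --- the $\sigma'$-factors in \eqref{eq:affine-map} absorbing the discrepancy between $t_i$ and $q_i$ and producing the sign $(-1)^{[e_i][f_j]}$ that turns ordinary commutators into graded ones --- is exactly how the relations close; e.g.\ for $i=n$ one gets $\psi(e_n)\psi(f_n)+\psi(f_n)\psi(e_n)=-\sigma'_n\bigl(e'_nf'_n-f'_ne'_n\bigr)$, and the sign is cancelled by $t_n^2-t_n^{-2}=-(q_n^2-q_n^{-2})$.

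The one genuine flaw is your treatment of node $0$, which is where all the case-by-case content of the lemma sits: the claim is precisely that the listed $\iota_e,\iota_f$ make the relations involving $e_0,f_0,k_0$ hold. You defer this, and the justification you offer --- that the verification is ``guaranteed to close'' by \cite[Theorem 4.9]{XZ} --- is circular, since that classification theorem is (up to normalisation) the statement being proved; it cannot serve as the guarantee inside a proof of it. What you do get right is that the outstanding work is a finite sign check with a completely specified method. For instance, for $\g'=B_n^{(1)}$ one has $\iota_e\iota_f=\sigma'_1$, so $\psi(k_0)=\sigma'_1k'_0$, and the Cartan relations at node $0$ reduce to the statement that $(\alpha'_0+\alpha'_1,\alpha'_j)$ is even for every $j\neq 0$, which one verifies directly ($2$, $-2$ or $0$); analogous computations are required for $A_{2n}^{(2)}$ and $D_{n+1}^{(2)}$, and for the mixed and affine Serre relations involving $e_0$ and $f_0$. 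Executing those three diagram-by-diagram checks, rather than citing the classification, is what is needed to turn your outline into a complete proof.
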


\begin{rem} 
Within the context of Hopf algebras over braided tensor categories, 
the above associative algebra isomorphism becomes a Hopf algebra 
isomorphism; see \cite[Section 4]{XZ} for details.
\end{rem}

Now for each $\g'$, we introduce a  surjection $o: I_n=\{1,\dots,n\}\to \{\pm 1\}$ defined by  $o(i)=(-1)^{n-i}$. We also define $c:=c(\g)$  such that
$c=1/2$ if $\g=\osp(2|2n)^{(2)}$,  and  1 otherwise.

We have the following result.
\begin{theo}\label{lem:iso}
For each pair $(\g, \g')$ in Table \ref{tbl:g},
there is an isomorphism $\varphi: \D_{q}(\g)\stackrel{\sim}{\longrightarrow}\D_{t}(\g')$
of associative algebras given by
\begin{equation}\label{eq:dr-map}
\begin{aligned}
& \ga\mapsto\ga', \ \ \h_{i,r} \mapsto -o(i)^{r c}  \h'_{i,r},
       \ \ \qk^{\pm 1}_i \mapsto \sigma'_i  \qk'^{\pm 1}_i,
       \ \ \sigma_i\mapsto \sigma'_i,\\
&     \e_{i,r} \mapsto o(i)^{r c} \left(\prod_{k=i+1}^{m+n}\sigma'_k\right)  \x^{+}_{i,r},
\quad \f_{i,r}  \mapsto o(i)^{r c} \left(\prod_{k=i+1}^{m+n}\sigma'_k\right)  \x^{-}_{i,r}.
\end{aligned}
\end{equation}
\end{theo}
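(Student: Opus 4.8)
The plan is to show that the assignment \eqref{eq:dr-map} on generators respects every defining relation of $\D_q(\g)$, so that it extends to an algebra homomorphism $\varphi$, and then to observe that the same formulas, solved for the primed generators, furnish a two-sided inverse. Since each generator of $\D_q(\g)$ is sent to a unit of $K$ times a group element times a generator of $\D_t(\g')$, and the scalars $o(i)^{rc}$ together with the group elements $\sigma'_k$ are invertible, the inverse assignment is explicit once $\varphi$ is known to be a homomorphism, and the check that it too is a homomorphism is entirely symmetric. Compatibility with the smash-product cross relations $\sigma_i x=(\sigma_i\cdot x)\sigma_i$ is immediate from $\sigma_i\mapsto\sigma'_i$ and the fact that \eqref{eq:G-act} and the $\mathrm{G}'$-action both multiply $\ef_{i,r}$ by $(-1)^{(\alpha_i,\alpha_j)}=(-1)^{(\alpha'_i,\alpha'_j)}$. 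Hence the entire content of the theorem lies in verifying the relations of Definition \ref{def:DR}.

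The first thing to record is the numerical dictionary. Since $t^{1/2}=\sqrt{-1}\,q^{1/2}$ we have $t=-q$ and $t-t^{-1}=-(q-q^{-1})$; because $(\g,\g')$ share the Cartan matrix $A$ with the common normalisation $(\alpha_n,\alpha_n)=(\alpha'_n,\alpha'_n)=1$, the bilinear forms agree, so $t_i^{a_{ij}}=(-1)^{(\alpha_i,\alpha_j)}q_i^{a_{ij}}$ and $t_n^{2r}=(-1)^r q_n^{2r}$. Feeding these into \eqref{eq:u-def} and \eqref{eq:u'} yields the key structure-constant identity
\[
u_{i,j,r}=o(i)^{rc}\,o(j)^{-rc}\,u'_{i,j,r},
\]
in which the sign/phase factor $o(i)^{rc}o(j)^{-rc}=(-1)^{(j-i)rc}$ is precisely the phase carried by the $o$-powers attached to the generators in \eqref{eq:dr-map}. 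In the off-diagonal cases this is a one-line check, while in the diagonal case $i=j=n$ the two phases cancel and one verifies directly that $u_{n,n,r}=u'_{n,n,r}$; for $\osp(1|2n)^{(1)}$ versus $A_{2n}^{(2)}$, for instance, this reduces to the identity $(q^r-q^{-r})(q^r+q^{-r}-1)=q^{2r}-q^{-2r}-q^r+q^{-r}$.

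With the dictionary in hand, the relations \eqref{eq:hx}, \eqref{eq:hh} and \eqref{eq:xx} follow by direct substitution. One moves every $\sigma'_k$ to one side using \eqref{eq:G-act}; the resulting signs $(-1)^{(\alpha'_i,\alpha'_j)}$ combine with the sign factors built into \eqref{eq:dr-map} -- the leading minus in $\h_{i,r}\mapsto-o(i)^{rc}\h'_{i,r}$, the powers $o(i)^{rc}$ tracking the shift index, and the denominator sign $t-t^{-1}=-(q-q^{-1})$ -- and, together with the identity $u_{i,j,r}=o(i)^{rc}o(j)^{-rc}u'_{i,j,r}$, reproduce exactly the primed relation. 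The Cartan-type relations for $\ga$ and $\qk_i$ are immediate.

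The heart of the argument, and the step I expect to be the main obstacle, is the verification of \eqref{eq:xrs-xsr} and the Serre relations (A)--(D). The essential phenomenon is that the super-bracket $[x,y]_a=xy-(-1)^{[x][y]}ayx$ on the generators of $\D_q(\g)$ must be turned into the ordinary bracket governing $\D_t(\g')$: the odd generators $\e_{n,r},\f_{n,r}$ contribute a parity sign that has to be produced entirely from the group elements $\prod_{k>i}\sigma'_k$ appearing in the images of $\ef_{i,r}$. Concretely, multiplying two such images and commuting the $\sigma'_k$ past a generator via \eqref{eq:G-act} yields a factor $(-1)^{(\alpha'_i,\alpha'_j)}$ that simultaneously supplies the missing parity sign and converts $q_i^{a_{ij}}$ into $t_i^{a_{ij}}$, while the $o(i)^{rc}$ keep the powers of $\sqrt{-1}$ aligned under the shift indices. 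The truly laborious part is matching the higher Serre relations (C) for $\osp(1|2n)^{(1)}$ and (D) for $\osp(2|2n)^{(2)}$ against items (B), (C) of the primed list, where several nested brackets and the symmetrisations $sym$ must be expanded. However, since those primed Serre relations are term-for-term the same expressions with $q$ replaced by $t$ and the parity signs deleted, the sign bookkeeping established for \eqref{eq:xrs-xsr} transfers monomial by monomial, and the identity follows; assembling all cases gives that $\varphi$ is an algebra homomorphism, and the inverse computation then yields the isomorphism.
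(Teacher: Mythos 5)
Your proposal is correct and follows essentially the same route as the paper: verify that the images of the generators satisfy the defining relations of $\D_q(\g)$ (using the identity $u_{i,j,r}=o(i)^{rc}o(j)^{rc}u'_{i,j,r}$, the dictionary $t=-q$, $t_i^{a_{ij}}=(-1)^{(\alpha_i,\alpha_j)}q_i^{a_{ij}}$, and the $\sigma'$-bookkeeping that trades super-bracket parity signs for the $(-1)^k$ factors and $q\to t$ substitutions in the primed relations), then read off the explicit inverse from \eqref{eq:dr-map}. The paper's proof carries out exactly these checks, likewise verifying one sample Serre relation ($\g'=B_n^{(1)}$) and omitting the rest, so there is no substantive difference in method or in level of detail.
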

\begin{proof}
If we can show that the map $\varphi$ indeed gives rise to a homomorphism of associative algebras,  then by inspecting \eqref{eq:dr-map}, we immediately see that it is an isomorphism with the inverse map given by
\begin{equation}\label{eq:iso-inv}
\begin{aligned}
\varphi^{-1}:\quad
&  \ga'\mapsto\ga, \quad \h'_{i,r} \mapsto -o(i)^{c r}  \h_{i,r},
        \quad \qk'_i \mapsto \sigma_i  \qk_i,
        \quad \sigma'_i\mapsto \sigma_i,\\
&\      \x^{+}_{i,r} \mapsto o(i)^{c r} \left(\prod_{k=i+1}^{m+n}\sigma_k\right)  \e_{i,r},
\quad \x^{-}_{i,r}  \mapsto o(i)^{c r} \left(\prod_{k=i+1}^{m+n}\sigma_k\right)  \f_{i,r}.
\end{aligned}
\end{equation}
We prove that $\varphi$ is an algebra homomorphism by showing that
the elements $\varphi(\ef_{i,r})$, $\varphi(\h_{i,r})$, $\varphi(\qk^{\pm}_i)$, $\varphi(\ga)$, $\varphi(\sigma_i)$ in $\D_q(\g') $ satisfy the  defining relations of $\D_q(\g)$.

Let us start by verifying the first set of relations in Definition \ref{def:DR}.
Using $\x^{\pm}_{i,r}\sigma'_j=(-1)^{(\alpha'_i,\alpha'_j)}\sigma'_j\x^{\pm}_{i,r}$ and $(-1)^{(\alpha'_i,\alpha'_j)}t_i^{\pm a_{ij}}=q_i^{\pm a_{ij}}$, we immediately obtain
\[
\varphi(\qk_i) \varphi(\ef_{j,r}) \varphi(\qk_i^{-1})=q_i^{\pm a_{ij}} \varphi(\ef_{j,r}).
\]
 Since $u_{i,j,r}=o(i)^{c r}o(j)^{c r}u'_{i,j,r}$,  we have
\begin{eqnarray*}
&[\varphi(\h_{i,r}), \varphi(\ef_{j,s})]  = \dfrac{  u_{i,j,r} \varphi(\ga)^{\mp|r|/2}  }
                                                   {   r(q-q^{-1})  }
                                                   \varphi( \ef_{j,s+r}),\\
&[\varphi(\h_{i,r}),\varphi(\h_{j,s})]=\delta_{r+s,0}
                              \dfrac{ u_{i,j,r} (\varphi(\ga)^{r}-\varphi(\ga)^{-r})    }
                                                           {  r   (q-q^{-1})(q-q^{-1})  } .
\end{eqnarray*}
Let $\Phi'_{j}=\prod_{k=j}^n\sigma'_k$.  Then
\begin{eqnarray*}
\x^{+}_{n,r}\Phi'_{j}=(-1)^{\delta_{n,j}}\Phi'_{j}\x^{+}_{n,r},
&&\x^{+}_{i,r}\Phi'_{j}=(-1)^{\delta_{i,j}+\delta_{i+1,j}}\Phi'_{j}\x^{+}_{i,r}, \quad i\neq n.
\end{eqnarray*}
Using this we obtain
\begin{eqnarray*}
&&\varphi(\e_{i,r})\varphi(\f_{j,s})-(-1)^{[\e_{i,r}][\f_{j,s}]}\varphi(\f_{j,s})\varphi(\e_{i,r}) \\
&&=\de_{i,j} \dfrac{   \varphi(\ga)^{\frac{r-s}{2}} \varphi(\qk_i)\varphi(\hh^{+}_{i,r+s})
                                   -  \varphi(\ga)^{\frac{s-r}{2}}\varphi(\qk_i)^{-1}\varphi(\hh^{-}_{i,r+s})   }
                                         {  q-q^{-1}  },
\end{eqnarray*}
where we have used $\varphi(\hh^{+}_{i,r+s})=o(i)^{c  r}\hh'^{+}_{i,r+s}$ since $\varphi(\h_{i,r})= -o(i)^{c  r} \h'_{i,r}$. Now we have the obvious relations
\begin{eqnarray*}
&&[\varphi(\e_{n,r}), \varphi(\e_{j,s})]_{q^{a_{nj}}_{n}}=(-1)^{\delta_{n-1,j}}\Phi'_{j+1}
     [\x^{+}_{n,r},\x^{+}_{j,s}]_{t^{a_{nj}}_{n}},\\
&&[\varphi(\e_{i,r}), \varphi(\e_{j,s})]_{q^{a_{ij}}_{i}}
                                                      =(-1)^{\delta_{i,j}+\delta_{i-1,j}}\Phi'_{i+1}\Phi'_{j+1}
    [\x^{+}_{i,r},\x^{+}_{j,s}]_{t^{a_{ij}}_{i}}, i\neq n.
\end{eqnarray*}
Using them  together with \eqref{eq:sym-dr}, we obatin
$sym_{r,s}[\varphi(\e_{i,r+ \theta}),\varphi(\e_{j,s})]_{q^{a_{ij}}_{i}}=0$,  if $(\g,i,j)\neq (A_{2n}^{(2)}, n, n)$.  We can similarly show that $sym_{r,s}[\varphi(\f_{i,r- \theta}),\varphi(\f_{j,s})]_{q^{a_{ij}}_{i}}=0$.

The Serre relations in Definition \ref{def:DR}
can be verified in the same way.  For example, in the case $\g'=B_n^{(1)}$, we have
\[\begin{aligned}
&    sym_{r_1,r_2,r_3} \sum_{k=0}^3\begin{bmatrix} 3\\k\end{bmatrix}_{\sqrt{-1} q_{n}}
               {\varphi(\e_{n,r_1})}\dots
                          {\varphi(\e_{n,r_k})} \varphi(\e_{n-1,s}){\varphi(\e_{n,r_{k+1}})}
                  \dots {\varphi(\e_{n,r_3})}\\
&=\sigma'_n sym_{r_1,r_2,r_3}
     \sum_{k=0}^3(-1)^k\begin{bmatrix} 3\\k\end{bmatrix}_{t_n}
              {\x^{+}_{n,r_1}}\dots
                      {\x^{+}_{n,r_k}}\x^{+}_{n-1,s}{\x^{+}_{n,r_{k+1}}}
               \dots {\x^{+}_{n,r_3}}=0.
\end{aligned}
\]
We omit the proof of the other Serre relations.
\end{proof}

\subsection{Proof of Theorem \ref{them:DR-iso}}\label{sec:DR-osp1}
The Main Theorem is an easy consequence of Theorem \ref{lem:iso}.
\begin{coro}
Theorem \ref{them:DR-iso} holds for each pair $(\g, \g')$ in Table \ref{tbl:g}.
\end{coro}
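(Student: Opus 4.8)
The plan is to obtain $\Psi$ as the restriction of a composite isomorphism between the enlarged (smash product) algebras, built from the three isomorphisms already at hand: the quantum correspondence $\psi\colon\UU_q(\g)\xrightarrow{\sim}\UU_t(\g')$ of Lemma \ref{lem:correspon}, the extension $\bar\rho\colon\UU_t(\g')\xrightarrow{\sim}\D_t(\g')$ of Drinfeld's isomorphism \eqref{eq:JD-Dr-alg} given by \eqref{eq:dr-iso-A}, and the isomorphism $\varphi\colon\D_q(\g)\xrightarrow{\sim}\D_t(\g')$ of Theorem \ref{lem:iso}. First I would form
\[
\Theta:=\varphi^{-1}\circ\bar\rho\circ\psi\colon\UU_q(\g)\xrightarrow{\ \sim\ }\D_q(\g),
\]
an isomorphism of associative algebras. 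Since the three factors send $\sigma_i\mapsto\sigma'_i$, then $\sigma'_i\mapsto\sigma'_i$, then $\sigma'_i\mapsto\sigma_i$, the composite $\Theta$ is the identity on $K\mathrm{G}$, and in particular is right $K\mathrm{G}$-linear.

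The first substantive step is to evaluate $\Theta$ on the Chevalley generators of $\Uq(\g)\subset\UU_q(\g)$ and to verify that the images lie in $\Dr_q(\g)\subset\D_q(\g)$ and agree with the assignments in the statement. For $1\le i\le n$ this is immediate: tracing $e_i$ through $\psi$ and $\bar\rho$ produces $\x^+_{i,0}$ dressed by a product of the $\sigma'_k$, and the matching product of $\sigma_k$ supplied by $\varphi^{-1}$ cancels it (using $o(i)^0=1$ and $\sigma_k^2=1$), leaving $\e_{i,0}$; the same cancellation yields $\f_{i,0}$ and $\qk_i$. For $k_0$ one checks directly that $\psi$, $\bar\rho$, $\varphi^{-1}$ carry it successively to $\iota_e\iota_f k'^{\pm1}_0$, to $\ga'\qk'^{-1}_{\g'}$, and finally to $\ga\qk_{\g}^{-1}$, the $\sigma_i$ in $\varphi^{-1}(\qk'_i)=\sigma_i\qk_i$ cancelling in the squares that constitute $\qk'_{\g'}$. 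The crux is $e_0$ and $f_0$: here one feeds the image under $\bar\rho\psi$ --- the nested strings $\ad_{\x^{\mp}_{i,0}}\cdots(\x^{\mp}_{1,\pm1})\,\ga'\qk'^{\mp1}_{\g'}$ of \eqref{eq:iso-alg} --- into $\varphi^{-1}$, uses that an algebra homomorphism intertwines the operators $\ad$ up to the conjugation signs built into \eqref{eq:ad}, and checks that the sign factors $o(i)^{\pm c}$ together with the $\sigma'$-dressings reorganise into precisely the claimed $\ad_{\f_{i,0}}$- (resp. $\ad_{\e_{i,0}}$-) strings applied to $\f_{1,1}$ (resp. $\e_{1,-1}$), times $\ga\qk_{\g}^{\mp1}$ and the constant $c_{\g}$. \emph{This sign and group-element bookkeeping through the adjoint strings is the main obstacle}; it is routine given the explicit forms of $\psi$ and $\varphi$ and the identity $u_{i,j,r}=o(i)^{cr}o(j)^{cr}u'_{i,j,r}$ already used in Theorem \ref{lem:iso}, while the scalar $c_{\g}$ is pinned down by comparing the two sides of relation \eqref{eq:xx-q} under $\Psi$.

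Once the images of all generators are seen to lie in $\Dr_q(\g)$ and to match the statement, it follows --- as $\Theta$ is an algebra homomorphism and these generators generate $\Uq(\g)$ --- that $\Theta(\Uq(\g))\subseteq\Dr_q(\g)$, and I would set $\Psi:=\Theta|_{\Uq(\g)}$, which realises exactly the assignments of the theorem. To finish I would deduce bijectivity of $\Psi$ from that of $\Theta$ by exploiting the $\mathrm{G}$-grading. Since $\mathrm{G}$ is finite we have $K$-space decompositions $\UU_q(\g)=\bigoplus_{\sigma\in\mathrm{G}}\Uq(\g)\sigma$ and $\D_q(\g)=\bigoplus_{\sigma\in\mathrm{G}}\Dr_q(\g)\sigma$. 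Because $\Theta$ is the identity on $K\mathrm{G}$ and maps $\Uq(\g)$ into $\Dr_q(\g)$, for $x\in\Uq(\g)$ and $\sigma\in\mathrm{G}$ we have $\Theta(x\sigma)=\Theta(x)\sigma\in\Dr_q(\g)\sigma$, so $\Theta$ is block-diagonal with respect to these decompositions. A bijective block-diagonal map has bijective diagonal blocks, and the block indexed by $\sigma=1$ is exactly $\Psi$; hence $\Psi\colon\Uq(\g)\xrightarrow{\sim}\Dr_q(\g)$ is an isomorphism. Carrying this out uniformly over the three rows of Table \ref{tbl:g} establishes the Main Theorem for every pair $(\g,\g')$.
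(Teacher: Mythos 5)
Your proposal is correct and takes essentially the same route as the paper: compose the quantum correspondence $\psi$, the extended Drinfeld isomorphism \eqref{eq:dr-iso-A}, and the inverse of $\varphi$ from Theorem \ref{lem:iso} into an isomorphism $\UU_q(\g)\rightarrow\D_q(\g)$ of the smash-product algebras, then restrict to $\Uq(\g)$. The differences are only expository: you spell out the generator bookkeeping and the block-diagonality argument that the paper compresses into ``one can easily check that $\Phi(\U_q(\g)\otimes 1)=\Dr_q(\g)\otimes 1$,'' and you write the composite correctly as $\varphi^{-1}\circ\rho\circ\psi$, where the paper's formula $\Phi=\varphi\circ\rho\circ\psi$ contains a typo.
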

\begin{proof} By composing the isomorphism \eqref{eq:JD-Dr-alg} with those in Lemma \ref{lem:correspon} and Theorem  \ref{lem:iso},
we immediately  obtain the algebra isomorphism
$$\Phi=\varphi\circ\rho\circ\psi: \UU_{q}(\g)  \mapsto  \D_{q}(\g).$$
Note that $\Phi$ preserves the $\Z_2$-grading, thus is an isomorphism of superalgebras.

One can easily check that $\Phi(\U_q(\g)\otimes 1)=\Dr_q (\g)\otimes 1$. Let
$\eta: \U_{q}(\g) \longrightarrow  \UU_{q}(\g)$ be the embedding $x\mapsto x\otimes 1$,
and $\upsilon: \Dr_q (\g)\otimes 1 \longrightarrow \Dr_q (\g)$ be the natural isomorphism
$y\otimes 1\mapsto y$.  Then $\upsilon\circ\Phi\circ\eta$ is the superalgebra isomorphism $\Psi$ of Theorem \ref{them:DR-iso}.
\end{proof}

We comment on a possible alternative approach to the proof of Theorem \ref{them:DR-iso}.
For the affine Lie superalgebras in \eqref{eq:g},
the combinatorics of the affine Weyl groups of the
root systems essentially controls the structures of the affine Lie superalgebras themselves.
The corresponding quantum affine superalgebras
have enough Lusztig automorphisms, which can in principle be used to prove
Theorem \ref{them:DR-iso} by following the approach of \cite{Be}.
It will be interesting to work out the details of such a proof,
although it is expected to be much more involved than the one given here.

\section*{Acknowledgements}
This research was supported by National Natural Science Foundation of China Grants No. 11301130,  No. 11431010,
and Australian Research Council Discovery-Project Grant DP140103239.


\begin{thebibliography}{9999}

\bibitem{Be} Beck, Jonathan, Braid group action and quantum affine algebras. {\sl Comm. Math. Phys. \bf 165} (1994), no. 3, 555--568.

\bibitem{BGZ} Bracken, A. J.; Gould, M. D.; Zhang, R. B.,
Quantum supergroups and solutions of the Yang-Baxter Equation.
{\sl Modern Physics Letters \bf A5} (1990) no. 11, 831--840.

\bibitem{CP1} Chari, Vyjayanthi; Pressley, Andrew, Quantum affine algebras and their representations. {\sl Representations of groups} (Banff, AB, 1994), 59--78, CMS Conf. Proc., 16, Amer. Math. Soc., Providence, RI, 1995.

\bibitem{CP2} Chari, Vyjayanthi; Pressley, Andrew, Twisted quantum affine algebras. {\sl Comm. Math. Phys. \bf {196}} (1998), no. 2, 461--476.

\bibitem{De} Damiani, Ilaria, Drinfeld realisation of affine quantum algebras: the relations. {\sl Publ. Res. Inst. Math. Sci. \bf{ 48}} (2012), no. 3, 661--733.



\bibitem{Dr}  Drinfeld, V. G.,  A new realisation of Yangians and of quantum affine algebras. (Russian) {\sl Dokl. Akad. Nauk SSSR \bf {296} } (1987), no. 1, 13--17; translation in {\sl Soviet Math. Dokl. \bf{36}} (1988), no. 2, 212--216.

\bibitem{H} Heckenberger, Istvan; Spill, Fabian; Torrielli, Alessandro; Yamane, Hiroyuki,
 Drinfeld second realization of the quantum affine superalgebras of $D^{(1)}(2,1;x)$ via the Weyl groupoid. In {\em Combinatorial representation theory and related topics}, 171--216, RIMS Kokyuroku Bessatsu, B8, Res. Inst. Math. Sci. (RIMS), Kyoto, 2008.


\bibitem{Jn1}  Jing, Naihuan, Twisted vertex representations of quantum affine algebras. {\sl Invent. Math.\bf{102}} (1990), no. 3, 663--690.

\bibitem{Jn2} Jing, Naihuan, On Drinfeld realisation of quantum affine algebras. In {\em The Monster and Lie algebras} (Columbus, OH, 1996), 195--206, Ohio State Univ. Math. Res. Inst. Publ., 7, de Gruyter, Berlin, 1998.

\bibitem{JnM}  Jing, Naihuan; Misra, Kailash C., Vertex operators for twisted quantum affine algebras. {\sl Trans. Amer. Math. Soc. \bf{ 351}} (1999), no. 4, 1663--1690.

\bibitem{JZ} Jing, Naihuan; Zhang, Honglian, Drinfeld realisation of Quantum Twisted Affine Algebras via Braid Group. {\sl Adv. Math. Phys.} (2016), Art. ID 4843075, 15 pp.


\bibitem{WZ} Wu, Yuezhu; Zhang, R. B., Integrable representations of the quantum affine special linear superalgebra. {\sl
 Adv. Theor. Math. Phys. \bf{20}} (2016), no.3, 553-593.

\bibitem{XZ} Xu, Ying;  Zhang, R. B.,
Quantum correspondences of affine Lie superalgebras. {\sl Math. Research Lett.}, in press.  arXiv:1607.01142.

\bibitem{Y94} Yamane H.,  Quantized enveloping algebras associated with simple
Lie superalgebras and their universal $R$- matrices.
 {\sl Publ. RIMS. Kyoto Univ. \bf{30}} (1994), 15-87.

\bibitem{Y99} Yamane H.,  On definding relations of the affine Lie superalgebras
and their quantized universal enveloping superalgebras.
{\sl Publ. RIMS. Kyoto Univ. \bf{35}} (1999), 321-390.

\bibitem{Zh} Zhang, Huafeng,
Representations of quantum affine superalgebras. {\sl Math. Z. \bf 278}
 (2014), 663--703.

\bibitem{Z92a}  Zhang, R. B.,  Braid group representations arising from quantum supergroups
with arbitrary q and link polynomials.
{\sl J. Math. Phys. \bf 33} (1992), no. 11, 3918--3930.

\bibitem{Z92b}   Zhang, R. B.,  Finite-dimensional representations of $\U_q(osp(1/2n))$
and its connection with quantum $so(2n+1)$.
{\sl Lett. Math. Phys. \bf 25} (1992), no. 4, 317--325.

\bibitem{Z93} Zhang, R. B.,  Finite-dimensional irreducible representations of
the quantum supergroup  $\U_q(gl(m/n))$.
{\sl  J. Math. Phys. \bf 34} (1993), no. 3, 1236--1254.

\bibitem{Z95}  Zhang, R. B., Quantum supergroups and topological invariants of three-manifolds.
{\sl Rev. Math. Phys. \bf 7} (1995), no. 5, 809--831.

\bibitem{Z2} Zhang, R. B., Symmetrizable quantum affine superalgebras and their representations.
 {\sl J. Math Phys. \bf{38}} (1997),  535--543.


\bibitem{Z98}
Zhang, R. B.,  Structure and representations of the quantum general linear supergroup.
{\sl Commun. Math. Phys. \bf 195}  (1998)  525 -- 547.

\bibitem{ZBG91} Zhang, R. B.;  Bracken, A. J.;  Gould, M. D., 
Solution of the graded Yang-Baxter equation associated with
the vector representation of $\U_q(osp(M/2n))$.
{\sl Phys. Lett. \bf B 257} (1991), no. 1-2, 133-139.

\bibitem{ZGB91b}
Zhang, R. B.;   Gould, M. D.;  Bracken, A. J., 
Solutions of the graded classical Yang-Baxter equation and integrable models.
    {\sl J. Phys. \bf  A 24} (1991), no. 6, 1185--1197.

\end{thebibliography}
\end{document}